\numberwithin{equation}{section}
\newtheorem{theorem}{Theorem}[section]
\newtheorem{definition}[theorem]{Definition}
\theoremstyle{remark}
\newcommand{\Hom}{\mathrm{Hom}}
\newcommand{\eval}[2]{\llbracket #1 \rrbracket_{#2}}
\newcommand{\df}{\stackrel{\rm def}{=}}
\def\veca{\mathbf}
\def\vecb{\mathbf}
\begin{document}

\title{Non-three-colorable common graphs exist}

\author{Hamed Hatami}
\address{School of Computer Science, McGill University, Montreal, Canada.}
\email{hatami@cs.mcgill.ca}
\author{Jan Hladk\'y}
  \address{Department of Applied Mathematics,
        Faculty of Mathematics and Physics, Charles University,
        Malostransk\'e n\'am\v{e}st\'{\i}~25, 118~00 Prague,
        Czech Republic, and DIMAP, Department of Computer
       Science, University of Warwick, Coventry CV4 7AL,
United Kingdom,}
  \email{honzahladky@gmail.com}
\author{Daniel Kr\'al'}
\address{Institute for Theoretical Computer Science, Faculty of Mathematics and Physics, Charles University,
        Malostransk\'e n\'am\v{e}st\'{\i}~25, 118~00 Prague, Czech Republic.}
\email{kral@kam.mff.cuni.cz}
\author{Serguei Norine}
\address{Department of Mathematics, Princeton University, Princeton, NJ, USA.}
\email{snorin@math.princeton.edu}
\author{Alexander Razborov}
\address{Department of Computer Science, University of Chicago, IL, USA.}
\email{razborov@cs.uchicago.edu}
\thanks{HH was partially supported by an NSERC Discovery Grant.
JH was supported by EPSRC award EP/D063191/1.
DK:~The~work leading to this invention has received funding from the European Research Council under the European Union's Seventh Framework Programme (FP7/2007-2013)/ERC grant agreement no.~259385. SN was supported in part by NSF
  under Grant No.~DMS-0803214. Part of AR's work was done while
the author was at Steklov Mathematical Institute, supported by the Russian Foundation for Basic Research, and at Toyota Technological Institute, Chicago.}

\begin{abstract}
A graph $H$ is called \emph{common} if the total number of copies of $H$ in every graph and its complement asymptotically minimizes for random graphs. A former conjecture of Burr and Rosta, extending a conjecture of Erd\H{o}s asserted that every graph is common. Thomason
 disproved both conjectures by showing that $K_4$ is not common. It is now known that in fact the common graphs are very rare. Answering a question of Sidorenko
 and of Jagger, {\v{S}}{\v{t}}ov{\'{\i}}{\v{c}}ek and Thomason
 from~1996 we show that the $5$-wheel is common. This provides the first example of a common graph that is not three-colorable.
\end{abstract}

\maketitle

\section{Introduction \label{sec:intro}}

A natural question in extremal graph theory is how many monochromatic subgraphs isomorphic to a graph $H$ must be contained in any two-coloring
of the edges of the complete graph $K_n$. Equivalently, how many subgraphs isomorphic to a graph $H$ must be contained in a graph and its complement?

Goodman~\cite{MR0107610} showed that for $H=K_3$, the optimum solution is essentially obtained by a typical random graph. The graphs $H$ that satisfy this property are called \emph{common}. Erd\H{o}s~\cite{MR0151956} conjectured that all complete graphs are common. Later, this conjecture was extended to all graphs by Burr and Rosta~\cite{MR595601}. Sidorenko~\cite{MR1033422} disproved Burr and Rosta's conjecture by showing that a triangle with a pendant edge is not common. Later  Thomason~\cite{MR991659}  disproved Erd\H{o}s's conjecture by showing that for $p \ge 4$, the complete graphs $K_p$ are not common.  It is now known that in fact the common graphs are very rare.
For example, Jagger, \v{S}\v{t}ov\'{\i}\v{c}ek and Thomason~\cite{MR1394515} showed that every graph that contains $K_4$ as a subgraph is not common. If we work with $k$-edge-colorings of $K_n$ rather than 2-edge-colorings we get the notion of a \emph{$k$-common} graph.  Cummings and Young~\cite{CummYoung} recently proved that no graph containing the triangle $K_3$ is 3-common, a counterpart of the result of Jagger, \v{S}\v{t}ov\'{\i}\v{c}ek and Thomason above.

There are some classes of graphs that are known to be common. Sidorenko~\cite{MR1033422} showed that cycles are common. A conjecture due to Erd\H{o}s and Simonovits~\cite{MR776802} and Sidorenko \cite{MR1138091,MR1225933} asserts that for every bipartite graph $H$, among graphs of given density random graphs essentially contain the least number of subgraphs isomorphic to $H$.
It is not hard to see that every graph $H$ with the latter property is common, therefore
this conjecture would imply that all bipartite graphs are common. The Erd\H{o}s-Simonovits-Sidorenko conjecture has been verified for a handful of graphs~\cite{MR1225933,MR1605401,HH,CFS}, and hence there are various classes of bipartite graphs that are known to be common. In~\cite{MR1394515} and~\cite{MR1605401} some graph operations are introduced that can be used to ``glue'' common graphs in order to construct new common graphs. However none of these operations can increase the chromatic number to a number larger than three, and as a result, all of the known common graphs are of chromatic number at most $3$. With these considerations Jagger, \v{S}\v{t}ov\'{\i}\v{c}ek and Thomason~\cite{MR1394515} state ``We regard the determination of the commonality of $W_5$ [the wheel with $5$ spokes] as the most interesting open problem in the area.''

We will prove in Theorem~\ref{thm:main} that $W_5$ (see Figure~\ref{fig:wheel}) is common.
\begin{figure}[ht]
\begin{center}
\setlength{\unitlength}{0.254mm}
\begin{picture}(64,83)(28,-96)
        \special{color rgb 0 0 0}\allinethickness{0.254mm}\special{sh 0.99}\put(40,-75){\ellipse{4}{4}} 
        \special{color rgb 0 0 0}\allinethickness{0.254mm}\special{sh 0.99}\put(80,-75){\ellipse{4}{4}} 
        \special{color rgb 0 0 0}\allinethickness{0.254mm}\special{sh 0.99}\put(90,-40){\ellipse{4}{4}} 
        \special{color rgb 0 0 0}\allinethickness{0.254mm}\special{sh 0.99}\put(30,-40){\ellipse{4}{4}} 
        \special{color rgb 0 0 0}\allinethickness{0.254mm}\special{sh 0.99}\put(60,-15){\ellipse{4}{4}} 
        \special{color rgb 0 0 0}\allinethickness{0.254mm}\path(30,-40)(60,-15) 
        \special{color rgb 0 0 0}\allinethickness{0.254mm}\path(60,-15)(90,-40) 
        \special{color rgb 0 0 0}\allinethickness{0.254mm}\special{sh 0.99}\put(40,-75){\ellipse{4}{4}} 
        \special{color rgb 0 0 0}\allinethickness{0.254mm}\special{sh 0.99}\put(80,-75){\ellipse{4}{4}} 
        \special{color rgb 0 0 0}\allinethickness{0.254mm}\special{sh 0.99}\put(90,-40){\ellipse{4}{4}} 
        \special{color rgb 0 0 0}\allinethickness{0.254mm}\special{sh 0.99}\put(30,-40){\ellipse{4}{4}} 
        \special{color rgb 0 0 0}\allinethickness{0.254mm}\special{sh 0.99}\put(60,-15){\ellipse{4}{4}} 
        \special{color rgb 0 0 0}\allinethickness{0.254mm}\path(30,-40)(60,-15) 
        \special{color rgb 0 0 0}\allinethickness{0.254mm}\path(60,-15)(90,-40) 
        \special{color rgb 0 0 0}\allinethickness{0.254mm}\special{sh 0.99}\put(40,-75){\ellipse{4}{4}} 
        \special{color rgb 0 0 0}\allinethickness{0.254mm}\special{sh 0.99}\put(80,-75){\ellipse{4}{4}} 
        \special{color rgb 0 0 0}\allinethickness{0.254mm}\special{sh 0.99}\put(90,-40){\ellipse{4}{4}} 
        \special{color rgb 0 0 0}\allinethickness{0.254mm}\special{sh 0.99}\put(30,-40){\ellipse{4}{4}} 
        \special{color rgb 0 0 0}\allinethickness{0.254mm}\special{sh 0.99}\put(60,-15){\ellipse{4}{4}} 
        \special{color rgb 0 0 0}\allinethickness{0.254mm}\path(30,-40)(60,-15) 
        \special{color rgb 0 0 0}\allinethickness{0.254mm}\path(60,-15)(90,-40) 
        \special{color rgb 0 0 0}\allinethickness{0.254mm}\special{sh 0.99}\put(40,-75){\ellipse{4}{4}} 
        \special{color rgb 0 0 0}\allinethickness{0.254mm}\special{sh 0.99}\put(80,-75){\ellipse{4}{4}} 
        \special{color rgb 0 0 0}\allinethickness{0.254mm}\special{sh 0.99}\put(90,-40){\ellipse{4}{4}} 
        \special{color rgb 0 0 0}\allinethickness{0.254mm}\special{sh 0.99}\put(30,-40){\ellipse{4}{4}} 
        \special{color rgb 0 0 0}\allinethickness{0.254mm}\special{sh 0.99}\put(60,-15){\ellipse{4}{4}} 
        \special{color rgb 0 0 0}\allinethickness{0.254mm}\path(30,-40)(60,-15) 
        \special{color rgb 0 0 0}\allinethickness{0.254mm}\path(60,-15)(90,-40) 
        \special{color rgb 0 0 0}\allinethickness{0.254mm}\path(40,-75)(30,-40) 
        \special{color rgb 0 0 0}\allinethickness{0.254mm}\path(80,-75)(90,-40) 
        \special{color rgb 0 0 0}\allinethickness{0.254mm}\special{sh 0.99}\put(40,-75){\ellipse{4}{4}} 
        \special{color rgb 0 0 0}\allinethickness{0.254mm}\special{sh 0.99}\put(80,-75){\ellipse{4}{4}} 
        \special{color rgb 0 0 0}\allinethickness{0.254mm}\special{sh 0.99}\put(90,-40){\ellipse{4}{4}} 
        \special{color rgb 0 0 0}\allinethickness{0.254mm}\special{sh 0.99}\put(30,-40){\ellipse{4}{4}} 
        \special{color rgb 0 0 0}\allinethickness{0.254mm}\special{sh 0.99}\put(60,-15){\ellipse{4}{4}} 
        \special{color rgb 0 0 0}\allinethickness{0.254mm}\path(30,-40)(60,-15) 
        \special{color rgb 0 0 0}\allinethickness{0.254mm}\path(60,-15)(90,-40) 
        \special{color rgb 0 0 0}\allinethickness{0.254mm}\special{sh 0.99}\put(40,-75){\ellipse{4}{4}} 
        \special{color rgb 0 0 0}\allinethickness{0.254mm}\special{sh 0.99}\put(80,-75){\ellipse{4}{4}} 
        \special{color rgb 0 0 0}\allinethickness{0.254mm}\special{sh 0.99}\put(90,-40){\ellipse{4}{4}} 
        \special{color rgb 0 0 0}\allinethickness{0.254mm}\special{sh 0.99}\put(30,-40){\ellipse{4}{4}} 
        \special{color rgb 0 0 0}\allinethickness{0.254mm}\special{sh 0.99}\put(60,-15){\ellipse{4}{4}} 
        \special{color rgb 0 0 0}\allinethickness{0.254mm}\path(30,-40)(60,-15) 
        \special{color rgb 0 0 0}\allinethickness{0.254mm}\path(60,-15)(90,-40) 
        \special{color rgb 0 0 0}\allinethickness{0.254mm}\special{sh 0.99}\put(40,-75){\ellipse{4}{4}} 
        \special{color rgb 0 0 0}\allinethickness{0.254mm}\special{sh 0.99}\put(80,-75){\ellipse{4}{4}} 
        \special{color rgb 0 0 0}\allinethickness{0.254mm}\special{sh 0.99}\put(90,-40){\ellipse{4}{4}} 
        \special{color rgb 0 0 0}\allinethickness{0.254mm}\special{sh 0.99}\put(30,-40){\ellipse{4}{4}} 
        \special{color rgb 0 0 0}\allinethickness{0.254mm}\special{sh 0.99}\put(60,-15){\ellipse{4}{4}} 
        \special{color rgb 0 0 0}\allinethickness{0.254mm}\path(30,-40)(60,-15) 
        \special{color rgb 0 0 0}\allinethickness{0.254mm}\path(60,-15)(90,-40) 
        \special{color rgb 0 0 0}\allinethickness{0.254mm}\special{sh 0.99}\put(40,-75){\ellipse{4}{4}} 
        \special{color rgb 0 0 0}\allinethickness{0.254mm}\special{sh 0.99}\put(80,-75){\ellipse{4}{4}} 
        \special{color rgb 0 0 0}\allinethickness{0.254mm}\special{sh 0.99}\put(90,-40){\ellipse{4}{4}} 
        \special{color rgb 0 0 0}\allinethickness{0.254mm}\special{sh 0.99}\put(30,-40){\ellipse{4}{4}} 
        \special{color rgb 0 0 0}\allinethickness{0.254mm}\special{sh 0.99}\put(60,-15){\ellipse{4}{4}} 
        \special{color rgb 0 0 0}\allinethickness{0.254mm}\path(30,-40)(60,-15) 
        \special{color rgb 0 0 0}\allinethickness{0.254mm}\path(60,-15)(90,-40) 
        \special{color rgb 0 0 0}\put(50,-96){\shortstack{$W_5$}} 
        \special{color rgb 0 0 0}\allinethickness{0.254mm}\path(40,-75)(30,-40) 
        \special{color rgb 0 0 0}\allinethickness{0.254mm}\path(80,-75)(90,-40) 
        \special{color rgb 0 0 0}\allinethickness{0.254mm}\path(40,-75)(80,-75) 
        \special{color rgb 0 0 0}\allinethickness{0.254mm}\special{sh 0.3}\put(60,-50){\ellipse{4}{4}} 
        \special{color rgb 0 0 0}\allinethickness{0.254mm}\path(60,-50)(60,-15) 
        \special{color rgb 0 0 0}\allinethickness{0.254mm}\path(60,-50)(90,-40) 
        \special{color rgb 0 0 0}\allinethickness{0.254mm}\path(60,-50)(30,-40) 
        \special{color rgb 0 0 0}\allinethickness{0.254mm}\path(60,-50)(40,-75) 
        \special{color rgb 0 0 0}\allinethickness{0.254mm}\path(60,-50)(80,-75) 
        \special{color rgb 0 0 0} 
\end{picture}
\caption{\label{fig:wheel} The 5-wheel.}
\end{center}
\end{figure}
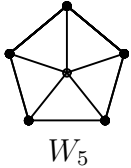
 This will also answer a question of Sidorenko~\cite{MR1605401}. He showed~\cite[Theorem~8]{MR1605401} that every graph that is obtained by adding a vertex of full degree to a bipartite graph of average degree at least one satisfying the Erd\H{o}s-Simonovits-Sidorenko conjecture is common. Sidorenko further asked whether in this theorem both conditions of being bipartite and having average degree at least one are essential in order to obtain a common graph. Our result answers his question in the negative, as $W_5$ is obtained by adding a vertex of full degree to a non-bipartite graph.

The proof of Theorem~\ref{thm:main} is a rather standard Cauchy-Schwarz calculation in flag algebras~\cite{MR2371204}, and is generated with the aid of a computer using semi-definite programming.
A similar approach was successfully applied for example in~\cite{RazborovTuran,HKN,Talbot,Grzesik,HHKNR}.
\section{Preliminaries}
We write vectors with bold font, e.g. $\veca a=(\vecb a(1),\veca a(2), \veca a(3))$ is a vector with three coordinates.
For every positive integer $k$, $[k]$ denotes the set $\{1,\ldots,k\}$.

All graphs in this paper are finite and simple (that is, loops and multiple edges are not allowed).
For every natural number $n$, let $\mathcal M_n$ denote the set of all simple graphs on $n$ vertices up to an
isomorphism. For a graph $G$, let $V(G)$ and $E(G)$, respectively denote  the set of the vertices and the edges of $G$.
The complement of $G$ is denoted by $G^\ast$.

The \emph{homomorphism density} of a graph $H$ in a graph $G$, denoted by $t(H;G)$, is the probability that a random map from the vertices of $H$
to the vertices of $G$ is a {\em graph homomorphism}, that is it maps every edge of $H$ to an edge of $G$. If $H\in \mathcal M_\ell$,
$G\in\mathcal M_n$, and $\ell\le n$, then $t_0(H;G)$ denotes the probability that a random {\bf injective} map from $V(H)$ to $V(G)$ is a graph
homomorphism, and $p(H,G)$ denotes the probability that a random set of $\ell$ vertices of $G$ induces a graph isomorphic to $H$. We have the
following chain rule (cf. \cite[Lemma 2.2]{MR2371204}):
\begin{equation} \label{eq:chain}
t_0(H;G) = \sum_{F\in \mathcal M_\ell}t_0(H;F)p(F,G),
\end{equation}
where $|V(H)|\leq \ell\leq |V(G)|$.

\begin{definition}
A graph $H$ is called \emph{common} if
\begin{equation}
\label{eq:common}
\liminf_{n\to\infty} \min_{G\in\mathcal M_n}(t(H;G) + t(H;G^\ast)) \ge 2^{1-|E(H)|}.
\end{equation}
\end{definition}
It is easy to see that as $n \rightarrow \infty$,  for a random graph $G$ on $n$ vertices, we have, with high probability,
$t(H;G) + t(H;G^\ast) = 2^{1-|E(H)|} \pm o(1)$. Thus, $H$ is common if the total number of copies of $H$ in every graph
and its complement asymptotically minimizes for random graphs. Note also that since $t(H;G)$ and $t_0(H;G)$ are asymptotically equal (again,
as $n \rightarrow \infty$), one could use $t_0(H;G)$ in place of $t(H;G)$ in \eqref{eq:common}, and this is what we will do in our proof.

\subsection{Flag algebras}
We assume certain familiarity with the theory of flag algebras from~\cite{MR2371204}. However, for the proof of the central Theorem~\ref{thm:main} only the most basic notions are required. Thus, instead of trying to duplicate definitions, we occasionally give pointers to relevant places in~\cite{MR2371204}.

In our application of the flag algebras calculus we work exclusively with the theory of simple graphs (cf.~\cite[\S 2]{MR2371204}). As in~\cite{MR2371204}, flags of type $\sigma$ and size $k$ are denoted by $\mathcal F^\sigma_k$. The flag algebra generated by all flags of type $\sigma$ is denoted by $\mathcal A^\sigma$ (cf.~\cite[\S2]{MR2371204}). Apart from already defined model $W_5\in\mathcal M_6$ we need to introduce the following models, types, and flags.

We shall work with five types $\sigma_0,\sigma_1,\ldots,\sigma_4$ of size four which are illustrated in Figure \ref{types}.
\begin{figure}[tbp]
\begin{center}
\setlength{\unitlength}{0.254mm}
\begin{picture}(372,222)(30,-356)
        \special{color rgb 0 0 0}\allinethickness{0.254mm}\special{sh 0.99}\put(40,-145){\ellipse{4}{4}} 
        \special{color rgb 0 0 0}\allinethickness{0.254mm}\special{sh 0.99}\put(40,-195){\ellipse{4}{4}} 
        \special{color rgb 0 0 0}\allinethickness{0.254mm}\special{sh 0.99}\put(90,-145){\ellipse{4}{4}} 
        \special{color rgb 0 0 0}\allinethickness{0.254mm}\special{sh 0.99}\put(90,-195){\ellipse{4}{4}} 
        \special{color rgb 0 0 0}\put(60,-211){\shortstack{$\sigma_0$}} 
        \special{color rgb 0 0 0}\allinethickness{0.254mm}\special{sh 0.99}\put(215,-145){\ellipse{4}{4}} 
        \special{color rgb 0 0 0}\allinethickness{0.254mm}\special{sh 0.99}\put(215,-195){\ellipse{4}{4}} 
        \special{color rgb 0 0 0}\allinethickness{0.254mm}\special{sh 0.99}\put(165,-145){\ellipse{4}{4}} 
        \special{color rgb 0 0 0}\allinethickness{0.254mm}\special{sh 0.99}\put(165,-195){\ellipse{4}{4}} 
        \special{color rgb 0 0 0}\allinethickness{0.254mm}\special{sh 0.99}\put(215,-145){\ellipse{4}{4}} 
        \special{color rgb 0 0 0}\allinethickness{0.254mm}\special{sh 0.99}\put(215,-195){\ellipse{4}{4}} 
        \special{color rgb 0 0 0}\put(185,-211){\shortstack{$\sigma_1$}} 
        \special{color rgb 0 0 0}\allinethickness{0.254mm}\special{sh 0.99}\put(290,-145){\ellipse{4}{4}} 
        \special{color rgb 0 0 0}\allinethickness{0.254mm}\special{sh 0.99}\put(290,-195){\ellipse{4}{4}} 
        \special{color rgb 0 0 0}\allinethickness{0.254mm}\special{sh 0.99}\put(340,-145){\ellipse{4}{4}} 
        \special{color rgb 0 0 0}\allinethickness{0.254mm}\special{sh 0.99}\put(340,-195){\ellipse{4}{4}} 
        \special{color rgb 0 0 0}\put(310,-211){\shortstack{$\sigma_2$}} 
        \special{color rgb 0 0 0}\allinethickness{0.254mm}\special{sh 0.99}\put(105,-290){\ellipse{4}{4}} 
        \special{color rgb 0 0 0}\allinethickness{0.254mm}\special{sh 0.99}\put(105,-340){\ellipse{4}{4}} 
        \special{color rgb 0 0 0}\allinethickness{0.254mm}\special{sh 0.99}\put(155,-290){\ellipse{4}{4}} 
        \special{color rgb 0 0 0}\allinethickness{0.254mm}\special{sh 0.99}\put(155,-340){\ellipse{4}{4}} 
        \special{color rgb 0 0 0}\put(125,-356){\shortstack{$\sigma_3$}} 
        \special{color rgb 0 0 0}\allinethickness{0.254mm}\special{sh 0.99}\put(280,-290){\ellipse{4}{4}} 
        \special{color rgb 0 0 0}\allinethickness{0.254mm}\special{sh 0.99}\put(280,-340){\ellipse{4}{4}} 
        \special{color rgb 0 0 0}\allinethickness{0.254mm}\special{sh 0.99}\put(230,-290){\ellipse{4}{4}} 
        \special{color rgb 0 0 0}\allinethickness{0.254mm}\special{sh 0.99}\put(230,-340){\ellipse{4}{4}} 
        \special{color rgb 0 0 0}\allinethickness{0.254mm}\special{sh 0.99}\put(280,-290){\ellipse{4}{4}} 
        \special{color rgb 0 0 0}\allinethickness{0.254mm}\special{sh 0.99}\put(280,-340){\ellipse{4}{4}} 
        \special{color rgb 0 0 0}\put(250,-356){\shortstack{$\sigma_4$}} 
        \special{color rgb 0 0 0}\allinethickness{0.254mm}\path(105,-340)(105,-290) 
        \special{color rgb 0 0 0}\allinethickness{0.254mm}\path(105,-340)(155,-340) 
        \special{color rgb 0 0 0}\allinethickness{0.254mm}\path(280,-340)(230,-340) 
        \special{color rgb 0 0 0}\allinethickness{0.254mm}\path(290,-145)(290,-195) 
        \special{color rgb 0 0 0}\allinethickness{0.254mm}\path(290,-195)(340,-195) 
        \special{color rgb 0 0 0}\put(155,-196){\shortstack{\scriptsize 1}} 
        \special{color rgb 0 0 0}\put(220,-196){\shortstack{\scriptsize 2}} 
        \special{color rgb 0 0 0}\put(155,-146){\shortstack{\scriptsize 3}} 
        \special{color rgb 0 0 0}\put(220,-146){\shortstack{\scriptsize 4}} 
        \special{color rgb 0 0 0}\allinethickness{0.254mm}\special{sh 0.99}\put(215,-145){\ellipse{4}{4}} 
        \special{color rgb 0 0 0}\allinethickness{0.254mm}\special{sh 0.99}\put(215,-195){\ellipse{4}{4}} 
        \special{color rgb 0 0 0}\allinethickness{0.254mm}\special{sh 0.99}\put(165,-145){\ellipse{4}{4}} 
        \special{color rgb 0 0 0}\allinethickness{0.254mm}\special{sh 0.99}\put(165,-195){\ellipse{4}{4}} 
        \special{color rgb 0 0 0}\allinethickness{0.254mm}\special{sh 0.99}\put(215,-145){\ellipse{4}{4}} 
        \special{color rgb 0 0 0}\allinethickness{0.254mm}\special{sh 0.99}\put(215,-195){\ellipse{4}{4}} 
        \special{color rgb 0 0 0}\put(155,-196){\shortstack{\scriptsize 1}} 
        \special{color rgb 0 0 0}\put(220,-196){\shortstack{\scriptsize 2}} 
        \special{color rgb 0 0 0}\put(155,-146){\shortstack{\scriptsize 3}} 
        \special{color rgb 0 0 0}\put(220,-146){\shortstack{\scriptsize 4}} 
        \special{color rgb 0 0 0}\put(30,-196){\shortstack{\scriptsize 1}} 
        \special{color rgb 0 0 0}\put(95,-196){\shortstack{\scriptsize 2}} 
        \special{color rgb 0 0 0}\put(30,-146){\shortstack{\scriptsize 3}} 
        \special{color rgb 0 0 0}\put(95,-146){\shortstack{\scriptsize 4}} 
        \special{color rgb 0 0 0}\put(280,-196){\shortstack{\scriptsize 1}} 
        \special{color rgb 0 0 0}\put(345,-196){\shortstack{\scriptsize 2}} 
        \special{color rgb 0 0 0}\put(280,-146){\shortstack{\scriptsize 3}} 
        \special{color rgb 0 0 0}\put(345,-146){\shortstack{\scriptsize 4}} 
        \special{color rgb 0 0 0}\put(220,-341){\shortstack{\scriptsize 1}} 
        \special{color rgb 0 0 0}\put(285,-341){\shortstack{\scriptsize 2}} 
        \special{color rgb 0 0 0}\put(220,-291){\shortstack{\scriptsize 3}} 
        \special{color rgb 0 0 0}\put(285,-291){\shortstack{\scriptsize 4}} 
        \special{color rgb 0 0 0}\put(95,-341){\shortstack{\scriptsize 1}} 
        \special{color rgb 0 0 0}\put(160,-341){\shortstack{\scriptsize 2}} 
        \special{color rgb 0 0 0}\put(95,-291){\shortstack{\scriptsize 3}} 
        \special{color rgb 0 0 0}\put(160,-291){\shortstack{\scriptsize 4}} 
        \special{color rgb 0 0 0}\allinethickness{0.254mm}\path(165,-195)(215,-195) 
        \special{color rgb 0 0 0}\allinethickness{0.254mm}\path(105,-340)(155,-290) 
        \special{color rgb 0 0 0}\allinethickness{0.254mm}\path(230,-290)(280,-290) 
        \special{color rgb 0 0 0} 
\end{picture}
\caption{\label{types} Types.}
\end{center}
\end{figure}
For a type $\sigma$ of size $k$ and a set of vertices $V\subseteq [k]$ in $\sigma$, let $F^\sigma_V$ denote the flag $(G,\theta)\in \mathcal F^\sigma_{k+1}$ in which the only unlabeled vertex $v$ is connected to the set $\{\theta(i)\::\:i\in V\}$.
We further define $f^\sigma_V\in \mathcal A^\sigma$ by
$$
f^\sigma_V \df F^\sigma_\emptyset - \frac{1}{|\text{Aut}(\sigma)|}\cdot\sum_{\eta\in \text{Aut}({\sigma})} F^\sigma_{\eta(V)}.
$$
These elements form a basis (for $V\neq\emptyset$ and with repetitions) in the space spanned by those $f\in\mathcal A^\sigma_{k+1}$ that are both
$\text{Aut}(\sigma)$-invariant and asymptotically vanish on random graphs; other than that, our particular choice of elements with this
property is more or less arbitrary.

Recall that in~\cite[\S2.2]{MR2371204} a certain ``averaging operator'' $\eval{\cdot}{}$ was introduced. This operator plays a central role in the flag algebra calculus.

Let $\ast \in \text{Aut}(\mathcal A^0)$ be the involution that corresponds to taking the complementary graph. That is, we  extend $\ast$ linearly from $\bigcup_n \mathcal M_n$ to $\mathcal A^0$.

\section{Main result}
We can now state the main result of the paper.
\begin{theorem}
\label{thm:main}
The $5$-wheel $W_5$ is common.
\end{theorem}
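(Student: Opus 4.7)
The plan is to establish the asymptotic inequality $t(W_5;G)+t(W_5;G^\ast)\ge 2^{-9}$ (since $|E(W_5)|=10$, so $2^{1-|E(W_5)|}=2^{-9}$) by exhibiting an explicit sum-of-squares certificate in the flag algebra. Replacing $t$ with $t_0$ (which differs only by $o(1)$) and applying the chain rule~\refeq{eq:chain} to expand every quantity involved in the common basis $\{p(F,G):F\in\mathcal M_\ell\}$ for a suitably large $\ell$, the theorem reduces to verifying a single identity in the asymptotic algebra $\mathcal A^0$.

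For each of the five types $\sigma_0,\ldots,\sigma_4$ shown in Figure~\ref{types}, I take candidate elements of the form
$$
q_i \;\df\; \sum_{\emptyset\neq V\subseteq[4]} c^i_V\, f^{\sigma_i}_V \;\in\; \mathcal A^{\sigma_i}
$$
with real coefficients $c^i_V$ to be determined (in fact allowing several independent such elements per type). The basis elements $f^{\sigma_i}_V$ are by construction $\operatorname{Aut}(\sigma_i)$-invariant and asymptotically vanish on random graphs, and the positivity principle of the flag calculus then guarantees that $\eval{q_i^2}{\sigma_i}$ is an asymptotically non-negative element of $\mathcal A^0$. The goal is to choose the coefficients so that
$$
\sum_{i=0}^{4}\eval{q_i^2}{\sigma_i}\;=\;t_0(W_5;\,\cdot\,)+t_0(W_5;\,(\cdot)^{\ast})-2^{-9}
$$
holds identically in $\mathcal A^0$. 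Since the right-hand side is fixed by the graph-complement involution $\ast$, I will impose the analogous symmetry on the collection $(q_i)$, which pairs up the types into $\ast$-orbits and roughly halves the number of free parameters.

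Expanding each side in the common $p(F,\cdot)$ basis, the identity becomes a linear system in positive semi-definite matrices $Q_i$ (encoding the sum of squares per type), that is, a semi-definite program, which I intend to solve numerically following the methodology of the flag-algebra papers cited in the introduction. The main obstacle is the final step: rounding the floating-point output to exact rational PSD matrices $Q_i$ that still satisfy the linear identity \emph{exactly}. The affine subspace cut out by that identity typically has no strict interior inside the product of positive semi-definite cones, so one cannot perturb freely; my plan is to exploit the $\operatorname{Aut}(\sigma_i)$- and $\ast$-symmetries to block-diagonalise each $Q_i$, perform the rounding inside each invariant block separately, and then certify positivity of the resulting rational matrices via exact Cholesky-type decompositions.
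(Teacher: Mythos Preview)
Your high-level strategy --- reduce to a flag-algebra inequality in $\mathcal A^0$ and certify it by an explicit sum of positive-semidefinite quadratic forms over the five $4$-vertex types --- is exactly the paper's. However, what you have written is a \emph{plan}, not a proof: the entire content of the argument is the explicit rational certificate (the PSD matrices and the exact identity they satisfy), and until those are actually produced and verified, nothing has been shown. The paper supplies seven concrete positive-definite rational matrices $M_{\sigma_i}^{+/-}$ and vectors $\veca g_i^{+/-}$, and checks the resulting identity coefficient-by-coefficient over $\mathcal M_6$.

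There are also two concrete points where your outline diverges from what the paper in fact needed. First, you restrict your quadratic forms to the $\operatorname{Aut}(\sigma_i)$-invariant elements $f^{\sigma_i}_V$. The paper's certificate does \emph{not} live in that subspace alone: for the types $\sigma_1$ and $\sigma_4$ it also uses vectors $\veca g_1^-,\veca g_4^-$ built from anti-invariant differences such as $F^{\sigma_1}_{\{3\}}-F^{\sigma_1}_{\{4\}}$, which are outside the span of the $f^{\sigma_i}_V$. Whether a feasible certificate exists using only the invariant block is not established, so this restriction is a real risk, not just a cosmetic choice. Second, your proposed $\ast$-symmetrisation by ``pairing up the types into $\ast$-orbits'' does not quite make sense here, since the five chosen types are not closed under graph complementation. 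The paper instead builds a single nonnegative element $R\in\mathcal A^0$ from the seven quadratic forms and then proves the \emph{equality} $\widehat W_5+\widehat W_5^\ast=2^{-9}+R+R^\ast$; the symmetrisation happens in $\mathcal A^0$, after averaging, not at the level of types.
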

\begin{proof}
Let $\widehat W_5\in\mathcal A^0$ be the element that counts the injective homomorphism density of the 5-wheel, that is
$$
\widehat W_5\df \sum_{F\in \mathcal M_6} t_0(W_5, F)F.
$$
We shall prove that
\begin{equation}\label{eq:FAformulation}
\widehat W_5+\widehat W_5^\ast \geq 2^{-9}\;,
\end{equation}
where the inequality $\leq$ in the algebra $\mathcal A^0$ is defined in \cite[Definition 6]{MR2371204}. An alternate interpretation of this
inequality \cite[Corollary 3.4]{MR2371204} is that
$$
\liminf_{n\to\infty} \min_{G\in \mathcal M_n} (p(\widehat W_5,G)+p(\widehat W_5^\ast,G)) \geq 2^{-9}.
$$
Since $p(\widehat W_5,G)=\sum_{F\in \mathcal M_6}t_0(\widehat W_5; F)p(F;G)=t_0(\widehat W_5; G)$ by \eqref{eq:chain}, and, likewise,
$p(\widehat W_5^\ast, G) = p(\widehat W_5, G^\ast)= t_0(\widehat W_5; G^\ast)$, \eqref{eq:FAformulation} implies Theorem \ref{thm:main}.

\bigskip
We now give a proof of~\eqref{eq:FAformulation}. To this end we work with suitable quadratic forms $Q_{\sigma_i}^{+/-}$ defined by symmetric
matrices $M_{\sigma_i}^{+/-}$ and vectors $\veca g_i^{+/-}$ in the algebras $\mathcal A^{\sigma_i}$. The numerical values of the matrices
$M_{\sigma_i}^{+/-}$ and vectors $\veca g_i^{+/-}$ are given in the appendix. It is essential that all the matrices $M_{\sigma_i}^{+/-}$ are
positive definite which can be verified using any general mathematical software. Next we define
$$R:=\left(\sum_{i=0}^4 \eval{Q_{\sigma_i}^+(\veca g_i^+)}{\sigma_i}\right) + \eval{Q_{\sigma_1}^-(\veca g_1^-)}{\sigma_1} + \eval{Q_{\sigma_4}^-(\veca g_4^-)}{\sigma_4}\;.$$
We claim that
\begin{equation} \label{eq:equality}
\widehat W_5+\widehat W_5^\ast = 2^{-9} +R+R^\ast.
\end{equation}
All the terms in~\eqref{eq:equality} can be expressed as linear combinations of graphs from $\mathcal M_6$ and thus checking~\eqref{eq:equality} amounts to checking the coefficients of the 156 flags from $\mathcal M_6$. We offer a \texttt{C}-code available at \texttt{http://kam.mff.cuni.cz/$\sim$kral/wheel} that verifies the equality~\eqref{eq:equality}.

By \cite[Theorem~3.14]{MR2371204}, we have $$\left(\sum_{i=0}^4 \eval{Q_{\sigma_i}^+(\veca g_i^+)}{\sigma_i}\right) + \eval{Q_{\sigma_1}^-(\veca g_1^-)}{\sigma_1} + \eval{Q_{\sigma_4}^-(\veca g_4^-)}{\sigma_4}\ge 0\;.$$
Therefore,~\eqref{eq:equality} implies~\eqref{eq:FAformulation}.
\end{proof}

Theorem~\ref{thm:main} shows that a typical random graph $G=G_{n,\frac12}$ asymptotically minimizes the quantity $t(W_5;G) + t(W_5;G^{\ast})$.
Extending our method, we convinced ourselves
that $G_{n,\frac12}$ is essentially the only minimizer of $t(W_5;G) + t(W_5;G^{\ast})$. In terms of flag algebras this means that
the homomorphism $\phi \in \Hom^+(\mathcal{A}^0, {\Bbb R})$ (see \cite[Definition 5]{MR2371204}) satisfying
$\phi(\widehat  W_5+\widehat W_5^\ast)=2^{-9}$ is unique.

The outline of the argument is as follows. Let $\rho \in \mathcal M_2$ denote a graph consisting of a single edge, let  $C_4 \in \mathcal{M}_4$ denote the cycle of length $4$, and,
as before, let $$
\widehat C_4 \df \sum_{F\in \mathcal M_4} t_0(C_4; F)F.$$ The Erd\H{o}s-Simonovits-Sidorenko conjecture is known for $C_4$~\cite{MR1138091}, and
it implies that $\widehat C_4 \geq \rho^4$ and $\widehat C_4^\ast\geq(1-\rho)^4$ in $\mathcal A^0$. Therefore, $C_4+C_4^\ast\geq 1/8$ (i.e., $C_4$
is common), and, moreover, every $\phi \in \Hom^+(\mathcal{A}^0, {\Bbb R})$ attaining equality must satisfy $\phi(\rho)=1/2$ and $\phi(\widehat C_4)=1/16$.

On the other hand, is is shown in
~\cite{ChGrWi} that the density of edges and the density cycles of length $4$ characterize quasi-random graphs, implying that the homomorphism $\phi$ satisfying $\phi(\widehat C_4 + \widehat C_4^\ast)=1/8$ is
unique (and corresponds to quasi-random graphs).
Therefore, to verify the uniqueness of the homomorphism  $\phi$  satisfying $\phi(\widehat  W_5+\widehat W_5^\ast)=2^{-9}$ it suffices to show that
\begin{equation}\label{eq:uniqueness}
\widehat W_5+\widehat W_5^\ast \geq 2^{-9} + \frac{1}{100}\left( \widehat C_4+\widehat C_4^\ast -1/8\right).
\end{equation}
We have used a computer program to verify (\ref{eq:uniqueness}), and it is telling us that this inequality holds with quite a convincing
level of accuracy $10^{-10}$. But we have not converted the floating point computations into a rigorous proof.

\section{Conclusion}
In this paper we have exhibited the first example of a common graph that is not three-colorable. This naturally gives rise to
the following interesting question: do there exist common graphs with arbitrarily large chromatic number?

\bibliographystyle{alpha}
\bibliography{wheel}

\appendix
\section{The matrices $M^{+/-}_i$ and the vectors $\veca g_i^{+/-}$}
Here, we list the numerical values of the matrices $M^{+/-}_i$ and the vectors $\veca g_i^{+/-}$.

\smallskip
The vectors $\veca g_i^+$ are given by the tuples
\begin{eqnarray*}
\veca g_0^+ &\df& (f^{\sigma_0}_{\{1\}}, f^{\sigma_0}_{\{1,2\}}, f^{\sigma_0}_{\{1,2,3\}}, f^{\sigma_0}_{\{1,2,3,4\}})\\
\veca g_1^+ &\df& (f^{\sigma_1}_{\{1\}}, f^{\sigma_1}_{\{3\}}, f^{\sigma_1}_{\{1,2\}}, f^{\sigma_1}_{\{1,3\}}, f^{\sigma_1}_{\{3,4\}}, f^{\sigma_1}_{\{1,2,3\}}, f^{\sigma_1}_{\{1,3,4\}}, f^{\sigma_1}_{\{1,2,3,4\}})\\
\veca g_2^+ &\df& (f^{\sigma_2}_{\{1\}}, f^{\sigma_2}_{\{2\}}, f^{\sigma_2}_{\{4\}}, f^{\sigma_2}_{\{1,2\}}, f^{\sigma_2}_{\{1,4\}}, f^{\sigma_2}_{\{2,3\}}, f^{\sigma_2}_{\{2,4\}}, f^{\sigma_2}_{\{1,2,3\}}, f^{\sigma_2}_{\{1,2,4\}}, f^{\sigma_2}_{\{2,3,4\}}, f^{\sigma_2}_{\{1,2,3,4\}})\\
\veca g_3^+ &\df& (f^{\sigma_3}_{\{1\}}, f^{\sigma_3}_{\{2\}},
f^{\sigma_3}_{\{1,2\}}, f^{\sigma_3}_{\{2,3\}}, f^{\sigma_3}_{\{1,2,3\}}, f^{\sigma_3}_{\{2,3,4\}}, f^{\sigma_3}_{\{1,2,3,4\}})\\
\veca g_4^+ &\df& (f^{\sigma_4}_{\{1\}}, f^{\sigma_4}_{\{1,2\}}, f^{\sigma_4}_{\{1,3\}}, f^{\sigma_4}_{\{1,2,3\}}, f^{\sigma_4}_{\{1,2,3,4\}}),
\end{eqnarray*}
and the vectors $\veca g_i^-$ are given by
\begin{eqnarray*}
\veca g_1^- &\df& (F^{\sigma_1}_{\{3\}}-F^{\sigma_1}_{\{4\}}, F^{\sigma_1}_{\{1,3,4\}}-F^{\sigma_1}_{\{2,3,4\}}, F^{\sigma_1}_{\{1,3\}}-F^{\sigma_1}_{\{2,3\}}, F^{\sigma_1}_{\{1,3\}}-F^{\sigma_1}_{\{2,4\}}, F^{\sigma_1}_{\{1,3\}}-F^{\sigma_1}_{\{3,4\}})\\
\veca g_4^- &\df& (F^{\sigma_4}_{\{1,2\}}-F^{\sigma_4}_{\{3,4\}}, F^{\sigma_4}_{\{1,3\}}-F^{\sigma_4}_{\{2,3\}}, F^{\sigma_4}_{\{1,3\}}-F^{\sigma_4}_{\{2,4\}}, F^{\sigma_4}_{\{1,3\}}-F^{\sigma_4}_{\{3,4\}}).
\end{eqnarray*}
The matrices $M^{+/-}_i$ are listed on the next two pages.

\begin{align*}
M_0^+&\df \frac 1{2\cdot 10^8}\times\left( \begin {array}{cccc} 104133330&-67645847&-126443014&-53041562
\\\noalign{\medskip}-67645847&58559244&68999274&28961030
\\\noalign{\medskip}-126443014&68999274&166581934&69653308
\\\noalign{\medskip}-53041562&28961030&69653308&29368489\end {array}
 \right)
\\
M_1^+&\df \frac 1{24\cdot 10^8} \times\\
\times&{\tiny\left( \begin {array}{cccccccc} 3376427096&-550659377&1175122309&-
274818336&-1951510989&133242698&-2978772360&-1118255328
\\\noalign{\medskip}-550659377&3579306230&-2818779263&254758382&
1853810147&-3593215008&1149060744&-2243131164\\\noalign{\medskip}
1175122309&-2818779263&2446135762&-153160723&-1883990616&2571244464&-
1644918408&1392930672\\\noalign{\medskip}-274818336&254758382&-
153160723&259013952&207245488&-524428416&59129384&-87439632
\\\noalign{\medskip}-1951510989&1853810147&-1883990616&207245488&
2026568566&-1339529064&2075124696&-196178016\\\noalign{\medskip}
133242698&-3593215008&2571244464&-524428416&-1339529064&4383894552&-
474279456&2753404296\\\noalign{\medskip}-2978772360&1149060744&-
1644918408&59129384&2075124696&-474279456&2987175794&578705400
\\\noalign{\medskip}-1118255328&-2243131164&1392930672&-87439632&-
196178016&2753404296&578705400&2302497768\end {array} \right)}
\\
M_3^+&\df \frac 1{24\cdot 10^8}\times \\ \times& {\tiny\left( \begin {array}{ccccccc} 1770465360&-40788068&770354664&-
280179622&-1109635560&-593033461&-1434435065\\\noalign{\medskip}-
40788068&503182008&-377074674&-65682192&-316936632&337167432&-
405260664\\\noalign{\medskip}770354664&-377074674&942288720&-5442408&-
584215338&-635915808&-299584920\\\noalign{\medskip}-280179622&-
65682192&-5442408&90869472&187091280&-48623352&356458176
\\\noalign{\medskip}-1109635560&-316936632&-584215338&187091280&
1325422128&196268064&1280101992\\\noalign{\medskip}-593033461&
337167432&-635915808&-48623352&196268064&706802676&-31363774
\\\noalign{\medskip}-1434435065&-405260664&-299584920&356458176&
1280101992&-31363774&1763018404\end {array} \right)}
\\
M_4^+&\df \frac 1{12\cdot 10^8} \times\left( \begin {array}{ccccc} 6589068&-137160&60408&-3635796&-5354976
\\\noalign{\medskip}-137160&3975070&-399180&-720636&-1388043
\\\noalign{\medskip}60408&-399180&3506988&-1778640&-3413616
\\\noalign{\medskip}-3635796&-720636&-1778640&5107716&3969708
\\\noalign{\medskip}-5354976&-1388043&-3413616&3969708&12276592
\end {array} \right)
\\
M_1^-&\df \frac 1{48\cdot 10^8} \times\left( \begin {array}{ccccc} 1871684759&828164352&153135600&
2205677647&32494800\\\noalign{\medskip}828164352&647325323&122226960&
1702274830&23569680\\\noalign{\medskip}153135600&122226960&32894794&
317036160&988560\\\noalign{\medskip}2205677647&1702274830&317036160&
4533494520&62236800\\\noalign{\medskip}32494800&23569680&988560&
62236800&7445060\end {array} \right)
\\
M_4^-&\df \frac 1{24\cdot 10^8} \times\left( \begin {array}{cccc} 371929992&-665160&31885344&6896381
\\\noalign{\medskip}-665160&4952616&15347271&-425892
\\\noalign{\medskip}31885344&15347271&420643536&5244336
\\\noalign{\medskip}6896381&-425892&5244336&1704738\end {array}
 \right).
\end{align*}
\begin{landscape}
\begin{align*}
M_2^+&\df \frac 1{24\cdot 10^8} \times \\
\times&\tiny{\left(\begin{array}{ccccccccccc} 4114457904&-2123660510&578302533&
2402100408&1609339896&-4979381511&-1073916061&-711542544&-108075291&-
311854200&-1172726832\\\noalign{\medskip}-2123660510&4697332052&-
146727648&-2893487330&-831349224&5132020824&1140828192&-2533278088&-
3120849612&586989168&-2130186959\\\noalign{\medskip}578302533&-
146727648&2842930424&-2377739616&2453284752&-1134538157&949692648&-
2122945241&799767696&-646840455&-1452441435\\\noalign{\medskip}
2402100408&-2893487330&-2377739616&5029589784&-1305679056&-3694198620&
-1628657160&2987352093&-17138568&174993936&1346820763
\\\noalign{\medskip}1609339896&-831349224&2453284752&-1305679056&
2899169976&-3008866416&227603736&-2158976640&1272333144&-824389152&-
1468496784\\\noalign{\medskip}-4979381511&5132020824&-1134538157&-
3694198620&-3008866416&9045922946&1585531176&-492543642&-2720802624&
1167719184&119548200\\\noalign{\medskip}-1073916061&1140828192&
949692648&-1628657160&227603736&1585531176&1198933584&-594013398&-
787158072&-14360286&-864511462\\\noalign{\medskip}-711542544&-
2533278088&-2122945241&2987352093&-2158976640&-492543642&-594013398&
4445640792&1152146526&408353664&3139157376\\\noalign{\medskip}-
108075291&-3120849612&799767696&-17138568&1272333144&-2720802624&-
787158072&1152146526&4353119928&-778415544&2410765872
\\\noalign{\medskip}-311854200&586989168&-646840455&174993936&-
824389152&1167719184&-14360286&408353664&-778415544&430490652&
217228440\\\noalign{\medskip}-1172726832&-2130186959&-1452441435&
1346820763&-1468496784&119548200&-864511462&3139157376&2410765872&
217228440&3407087808\end {array} \right)}
\end{align*}
\end{landscape}
\end{document}